\newtheorem{theorem}{Theorem}
\newtheorem{corollary}[theorem]{Corollary}
\newtheorem{lemma}[theorem]{Lemma}
\newtheorem{proposition}[theorem]{Proposition}
\theoremstyle{definition}
\newtheorem{definition}[theorem]{Definition}
\newtheorem{example}[theorem]{Example}
\newtheorem{remark}[theorem]{Remark}
\newtheorem{algorithm}[theorem]{Algorithm}
\begin{document}

\title{The arithmetic extensions of a numerical semigroup}

\author{I. Ojeda}
\address{Departamento de Matem\'aticas, Universidad de Extremadura (Spain)}
\email{\small ojedamc@unex.es}

\author{J.C. Rosales}
\address{Departamento de \'Algebra, Universidad de Granada (Spain)}
\email{jrosales@ugr.es}

\thanks{This work was supported by the regional research groups FQM-024 (Junta de Extremadura/FEDER funds) and  FQM-343 (Junta de Andalucia/Feder) and by the projects PGC2018-096446-B-C21 and MTM2017-84890-P.}

\subjclass[2010]{20M14, 11D07.}

\begin{abstract}
In this paper we introduce the notion of extension of a numerical semigroup. We provide a characterization of the numerical semigroups whose extensions are all arithmetic and we give an algorithm for the computation of the whole set of arithmetic extension of a given numerical semigroup. As by-product, new explicit formulas for the Frobenius number and the genus of proportionally modular semigroups are obtained.
\end{abstract}

\keywords{Numerical semigroup, extension, arithmetic extension, quotient of a semigroup, Frobenius number, genus, Apery set, Kunz-coordinates.}

\maketitle

\section*{Introduction}

Let $\mathbb{N}$ be the set of non-negative integers. A subset $M$ of $\mathbb{N}$ is called a \textbf{submonoid} of $(\mathbb{N},+)$ if $M$ contains $0 \in \mathbb{N}$ and $M$ is closed under the sum in $\mathbb{N}$. A \textbf{numerical semigroup} is a submonoid $S$ of $(\mathbb{N},+)$ such that $\mathbb{N} \setminus S$ is finite.

If $A$ is a non-empty subset of $\mathbb{N}$, then we will write $\langle A \rangle$ for the submonoid of $(\mathbb{N},+)$ generated by $A$, that is, \[\langle A \rangle = \Big\{ \sum_{i=1}^n u_i a_i\ \mid\ n \in \mathbb{N}\setminus\{0\},\ \{a_1, \ldots, a_n\} \subseteq A\ \text{and}\ \{u_1, \ldots, u_n\} \subset \mathbb{N} \Big\}.\] It is a well-known fact that $\langle A \rangle$ is a numerical semigroup if and only if $\gcd(A) = 1$ (see, for instance, \cite[Lemma 2.1]{libro}). 

If $M$ is a submonoid of $(\mathbb{N},+)$ such that $M = \langle A \rangle$ for some $A \subseteq \mathbb{N}$, then we will say that $A$ is \textbf{system of generators} of $M$. Furthermore, if $M \neq \langle B \rangle$ for every $B \subsetneq A$, then we will say that $A$ is a \textbf{minimal system of generators} of $M$. In \cite[Corollary 2.8]{libro}, it is proved that every submonoid of $(\mathbb{N},+)$ has a unique minimal system of generators, which in addition is finite. We will write $\operatorname{msg}(M)$ for the minimal system of generators of $M$.

If $S$ is a numerical semigroup, the elements in $\mathbb{N} \setminus S$ are called \textbf{gaps of $S$}. The cardinality of $\mathbb{N} \setminus S$ is the called \textbf{genus of $S$} and is denoted $\operatorname{g}(S)$. The smallest integer in $S \setminus \{0\}$ is called the \textbf{multiplicity of $S$} and is denoted $\operatorname{m}(S)$. The greatest integer that does not belong to $S$ is called the \textbf{Frobenius number of $S$} and is denoted $\operatorname{F}(S)$. 

Let $S$ and $T$ be numerical semigroups. We say that $T$ is an \textbf{extension} of $S$ if $S \subseteq T$ and we say that $T$ is an \textbf{arithmetic extension} of $S$ if there exist positive integer numbers $d_1, d_2, \ldots, d_n$ such that \[T = \big\{ x \in \mathbb{N}\ \mid\ d_i x \in S,\ i = 1, \ldots, n \big\}.\] Not all the extensions of a numerical semigroup is arithmetic. In Theorem \ref{Th 9}, we characterize the numerical semigroups whose extensions are all arithmetic.

If $S$ is a numerical semigroup and $d$ is a positive integer, then \[\frac{S}d := \big\{x \in \mathbb{N}\ \mid\ d\, x \in S\big\}\] is a numerical semigroup, too (see \cite[Proposition 5.1]{libro}). The numerical semigroup $\frac{S}d$ is called the \textbf{quotient of $S$ by $d$}. The quotients of numerical semigroups by an element have been studied by different authors; see, for instance, \cite{Comm-Alg, Swanson} and more recently \cite{borzi}.

Notice that $\frac{S}d = \mathbb{N}$ if and only if $d \in S$, and observe that $\frac{S}d$ is an arithmetic extension of $S$, and that an extension $T$ of $S$ is arithmetic if and only if there exists $\{d_1, \ldots, d_n\} \subseteq \mathbb{N} \setminus S$ such that \[T = \frac{S}{d_1} \cap \frac{S}{d_2} \cap \ldots \cap \frac{S}{d_n}.\] Indeed, by definition, $x \in T$ if and only if there exist positive integers $d_1, \ldots, d_n$ such that $d_i x \in S,$ for all $i \in \{1, \ldots, n\};$ equivalently, $x \in \frac{S}{d_i}$ for every $i \in \{1, \ldots, n\}.$  

Besides a theoretical interest, a major motivation for the study of arithmetic extensions of numerical semigroups relies in the following two examples.

In \cite{borzi} it is shown that quotients of Arf numerical semigroups (see \cite{arf}) by a positive integer are Arf numerical  semigroups. Since, the intersection of finitely many Arf numerical semigroups is an Arf numerical semigroup (see, for instance, \cite[Proposition 3.22]{libro}), we conclude that arithmetic extensions are stable for the Arf property. Arf numerical semigroups are a relevant family of numerical semigroups as they are semigroup of values of Arf rings (see \cite{lipman}).

In \cite{proportionally} the notion of proportionally modular numerical semigroup (PM-semigroup, for short) was introduced. In \cite{semigroup} the authors characterize the numerical semigroups that can be written as an intersection of finitely many PM-semigroups, these semigroups are called SPM-semigroups. On the other hand, in \cite{toms} Toms introduced a family of numerical semigroups, that we call T-semigrups, which are $K_0-$groups of certain $C^*-$algebras (see \cite[Theorem 1.1]{toms}). In \cite{canadian} it is shown that the T-semigruops are precisely the SPM-semigroups. As a consequence of \cite[Corollary 4.1]{archiv} we have that a numerical semigroup is SPM-semigroup if and only if the exist positive integer numbers $a, d_1, \ldots, d_n$ such that \begin{equation}\label{ecu0} S = \frac{\langle a, a+1 \rangle}{d_1} \cap \ldots \cap  \frac{\langle a, a+1 \rangle}{d_n}.\end{equation} Thus, we conclude that the T-semigroups are the arithmetic extensions of the numerical semigroups generated by two consecutive integers.

One of the main results in this paper is Theorem \ref{Th 12} which determines the Ap\'ery set of the quotient of a numerical semigroup by a positive integer. As a consequence, explicit formulas for the genus and the Frobenius number of PM-semigroups are obtained.

To conclude the introduction we outline how this note is organized: in Section \ref{Sect1} we study the first properties of the set of arithmetic extensions of a numerical semigroup and we characterize the numerical semigroups whose extensions are all arithmetic. In Section \ref{Sect2}, we see that if $S$ is a numerical semigroup and $n$ is an element in $S$ different from zero, then the Ap\'ery set of $\frac{S}d$ with respect $n$ can be easily computed in terms of the Ap\'ery set of $S$ with respect $n$. This fact will provide us nice explicit formulas for the Frobenius number and the genus of the numerical semigroup $\frac{\langle a, a+1\rangle}d$ (see Remark \ref{Rem 13}) and consequently of any T-semigroup by Proposition \ref{Prop 16}. Finally, in the last section, we take advantage of the results in the previous sections to formulate an algorithm for the computation of all the arithmetic extensions of a numerical semigroup. We finish the paper by giving a simple GAP \cite{gap} implementation of our algorithm; this implementation requires the GAP package \texttt{NumericalSgps} \cite{numericalsgps}.

\section[The set of arithmetic extensions]{The set of arithmetic extensions of a numerical semigroup}\label{Sect1}

Let $\Delta$ be a numerical semigroup and let $d_1, d_2, \ldots, d_n$ be positive integer numbers. We write $\Delta(\{d_1, d_2, \ldots, d_n\}) = \{x \in \mathbb{N}\ \mid\ d_i x \in \Delta,\ i = 1, \ldots, n\}$. Clearly, \[\Delta(\{d_1, d_2, \ldots, d_n\}) = \bigcap_{i=1}^n \frac{\Delta}{d_i} \] is an arithmetic extension of $\Delta$. By convention, we assume that $\Delta(\varnothing) = \mathbb{N}$.

In the following, we will denote by $\mathcal{F}(\Delta)$ the set of arithmetic extensions of $\Delta$. The proof of the following result is straightforward.

\begin{proposition}\label{Prop1}
If $\Delta$ is a numerical semigroup, then \[\mathcal{F}(\Delta) = \{\Delta(X)\ \mid\ X \subseteq \mathbb{N}\setminus \Delta\}.\] 
\end{proposition}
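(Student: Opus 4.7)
The plan is to prove both inclusions of the claimed set equality by directly unpacking the definitions, exploiting the observation (already recorded in the introduction) that $\frac{\Delta}{d} = \mathbb{N}$ whenever $d \in \Delta$, together with the fact that $\mathbb{N}\setminus\Delta$ is automatically finite.

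For the inclusion $\mathcal{F}(\Delta) \subseteq \{\Delta(X) \mid X \subseteq \mathbb{N}\setminus \Delta\}$, I would start from an arbitrary $T \in \mathcal{F}(\Delta)$. By the definition of arithmetic extension, there are positive integers $d_1,\ldots,d_n$ with $T = \bigcap_{i=1}^n \frac{\Delta}{d_i} = \Delta(\{d_1,\ldots,d_n\})$. Now I would discard those $d_i$ that happen to lie in $\Delta$: since $\frac{\Delta}{d_i} = \mathbb{N}$ for such indices, removing them from the list leaves the intersection unchanged. Setting $X := \{d_1,\ldots,d_n\}\setminus\Delta \subseteq \mathbb{N}\setminus\Delta$, we obtain $T = \Delta(X)$. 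The degenerate case in which every $d_i$ lies in $\Delta$ corresponds to $X = \varnothing$ and is handled by the convention $\Delta(\varnothing) = \mathbb{N}$.

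For the reverse inclusion, I would take an arbitrary $X \subseteq \mathbb{N}\setminus\Delta$. Because $\Delta$ is a numerical semigroup, $\mathbb{N}\setminus\Delta$ is finite, so $X$ itself is finite. If $X \neq \varnothing$, listing its elements as $d_1,\ldots,d_n$ produces $\Delta(X)$ as an intersection of finitely many quotients $\frac{\Delta}{d_i}$, so $\Delta(X)\in\mathcal{F}(\Delta)$ straight from the definition. If $X = \varnothing$, then $\Delta(X) = \mathbb{N}$, which belongs to $\mathcal{F}(\Delta)$ since $\mathbb{N} = \frac{\Delta}{d}$ for any $d\in\Delta\setminus\{0\}$.

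I expect no serious obstacle: the argument is essentially a bookkeeping exercise built on the two observations collected just before the statement (the characterization of arithmetic extensions as intersections of quotients, and the triviality $\frac{\Delta}{d}=\mathbb{N}$ for $d\in\Delta$), which is precisely why the authors label the proof as straightforward.
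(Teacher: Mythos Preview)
Your proposal is correct and matches exactly the kind of straightforward definition-unpacking the paper has in mind; the paper does not even spell out a proof, merely stating that it is immediate. Your handling of the two degenerate cases (all $d_i\in\Delta$ on one side, $X=\varnothing$ on the other) via the convention $\Delta(\varnothing)=\mathbb{N}$ is the only point requiring care, and you treat it correctly.
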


\begin{theorem}\label{Th1}
If $\Delta$ is a numerical semigroup, then \[\mathcal{F}(\Delta) = \big\{ \Delta(X) \mid X \subseteq \mathbb{N}\setminus \Delta\ \text{and}\ X = \operatorname{msg}(\langle X \rangle) \big\}.\]
\end{theorem}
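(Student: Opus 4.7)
The plan is to use Proposition \ref{Prop1} to reduce everything to showing that, for any subset $X \subseteq \mathbb{N}\setminus\Delta$, one has $\Delta(X) = \Delta(X')$ where $X' := \operatorname{msg}(\langle X \rangle)$, together with the inclusion $X' \subseteq X$.

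The $\supseteq$ direction of the theorem is immediate from Proposition \ref{Prop1}. For the $\subseteq$ direction, I would take $T \in \mathcal{F}(\Delta)$ and use Proposition \ref{Prop1} to write $T = \Delta(X)$ for some $X \subseteq \mathbb{N}\setminus \Delta$ (which is automatically finite since $\mathbb{N}\setminus\Delta$ is finite). I then set $X' = \operatorname{msg}(\langle X \rangle)$ and aim to prove that $X'\subseteq X\subseteq \mathbb{N}\setminus\Delta$ and $\Delta(X) = \Delta(X')$; this will give $T = \Delta(X')$ with $X'$ manifestly satisfying $X' = \operatorname{msg}(\langle X' \rangle)$ (since $\langle X' \rangle = \langle X \rangle$, so its minimal generating set is unchanged).

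The inclusion $X' \subseteq X$ is a general fact about submonoids of $(\mathbb{N},+)$: any minimal generator $d$ of $\langle X \rangle$ can be expressed as $d = \sum_i c_i x_i$ with $x_i \in X$ and $c_i \in \mathbb{N}$; by minimality of $d$ (i.e., $d$ cannot be written as a sum of two nonzero elements of $\langle X \rangle$) the total $\sum_i c_i$ must equal $1$, forcing $d = x_i \in X$ for exactly one $i$. Combined with $X \subseteq \mathbb{N}\setminus\Delta$ this gives $X' \subseteq \mathbb{N}\setminus\Delta$.

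The key computation is $\Delta(X) = \Delta(X')$. Since $X' \subseteq X$, clearly $\Delta(X) \subseteq \Delta(X')$. For the reverse inclusion, let $x \in \Delta(X')$; given any $d \in X$, write $d = \sum_i c_i d'_i$ with $d'_i \in X'$ and $c_i \in \mathbb{N}$ (possible because $X \subseteq \langle X \rangle = \langle X' \rangle$), noting that $d > 0$ forces at least one $c_i > 0$. Then
\[
d\,x = \sum_i c_i\, (d'_i x)
\]
is a sum of elements $d'_i x \in \Delta$, so it lies in $\Delta$ because $\Delta$ is closed under addition, hence $x \in \Delta(X)$. This is really the crux of the argument: the quotient operation depends only on the submonoid generated by the divisors, not on the divisors themselves. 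No step is a serious obstacle, but the place that requires a moment's care is simply noting that every element of $X$ has strictly positive integer combinations in the $d'_i$'s so that the sum involves at least one nonzero term of $\Delta$.
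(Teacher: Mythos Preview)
Your proof is correct and follows essentially the same route as the paper: reduce via Proposition~\ref{Prop1}, replace an arbitrary $X\subseteq\mathbb{N}\setminus\Delta$ by $X'=\operatorname{msg}(\langle X\rangle)\subseteq X$, and show $\Delta(X)=\Delta(X')$ using that each $d\in X$ is an $\mathbb{N}$-combination of the $d'_i\in X'$. You are in fact more careful than the paper, which simply asserts $Y\subseteq X$ without comment; your added remark about needing at least one $c_i>0$ is harmless but unnecessary, since even the empty sum equals $0\in\Delta$.
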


\begin{proof}
Let $\mathcal{A} =  \big\{ \Delta(X) \mid X \subseteq \mathbb{N}\setminus \Delta\ \text{and}\ X = \operatorname{msg}(\langle X \rangle) \big\}$. By Proposition \ref{Prop1}, $\mathcal{A} \subseteq \mathcal{F}(\Delta)$. Let us prove the opposite inclusion. To this end, we consider $S \in \mathcal{F}(\Delta)$. By definition, we have that there exists $X = \{x_1, x_2, \ldots, x_n\} \subseteq \mathbb{N} \setminus \Delta$ such that $S = \Delta(X) = \frac{\Delta}{x_1} \cap \frac{\Delta}{x_2} \cap \ldots \cap \frac{\Delta}{x_n}$. Let $Y = \{y_1, y_2, \ldots, y_m\} = \operatorname{msg}(X)$. Since $Y \subseteq X$, we have that \[\Delta(X) \subseteq \Delta(Y).\] On the other hand, if $y \in \Delta(Y)$, then $y y_j \in \Delta$, for every $j = 1, \ldots, m$ and $y x_i \in \Delta$, because $x_i \in \langle y_1, y_2, \ldots, y_m \rangle$, for every $i = 1, \ldots, n$. Thus $y \in \Delta(X)$. Therefore $\Delta(X) = \Delta(Y) \in \mathcal{A}$.
\end{proof}

In general, not all the extensions of numerical semigroup are arithmetic, as we will see later on. One of the aims of this section is to characterize the numerical semigroups $\Delta$ such that $\mathcal{F}(\Delta)$ agrees with the set of extension of $\Delta$.

If $S$ is a numerical semigroup. An element $x \in \mathbb{N} \setminus S$ is said to be a \textbf{fundamental gap} of $S$ if $\{2x,3x\} \subset S$ (see Section 5 of Chapter 3 in \cite{libro} or \cite{fundamental}). The set of fundamental gaps of $S$ is denoted by $\operatorname{FG}(S)$. Observe that $\operatorname{F}(S) \in \operatorname{FG}(S)$, provided that $S \neq \mathbb{N}$.

The proof of the following result is trivial.

\begin{lemma}\label{lemma2}
Let $S$ be a numerical semigroup and let $x$ be a gap of $S$. Then $x \in \operatorname{FG}(S)$ if and only if $\big\{k\,x \mid k \in \mathbb{N} \setminus \{1\} \big\} \subseteq S$.
\end{lemma}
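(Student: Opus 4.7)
The plan is to prove both implications directly, with the reverse implication being immediate and the forward implication reducing to an elementary numerical observation about the submonoid of $(\mathbb{N},+)$ generated by $2$ and $3$.

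For the ``if'' direction, suppose $\{k x \mid k \in \mathbb{N}\setminus\{1\}\}\subseteq S$. Taking $k=2$ and $k=3$ gives $2x, 3x \in S$, so by the definition recalled just before the statement, $x \in \operatorname{FG}(S)$ (we already know $x$ is a gap by hypothesis).

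For the ``only if'' direction, assume $x \in \operatorname{FG}(S)$, so $2x, 3x \in S$. Since $S$ is a submonoid of $(\mathbb{N},+)$, it is closed under sums, so every element of the form $a(2x) + b(3x) = (2a+3b)x$ with $a, b \in \mathbb{N}$ lies in $S$. Thus it suffices to note that every integer $k \geq 2$ can be written as $k = 2a + 3b$ for some $a, b \in \mathbb{N}$; indeed, $2 = 2\cdot 1 + 3\cdot 0$, $3 = 2\cdot 0 + 3\cdot 1$, and inductively if $k \geq 4$, then $k - 2 \geq 2$ admits such a representation, and adding one more copy of $2$ gives the representation for $k$. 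Hence $kx \in S$ for every $k \in \mathbb{N}\setminus\{0,1\}$, which is precisely the claim.

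I expect no real obstacle here: the lemma is labelled ``trivial'' in the paper, and the only substantive ingredient is the (well-known) fact that $\langle 2,3\rangle = \mathbb{N}\setminus\{1\}$, which handles the combinatorial content of the forward direction in one line. The rest is just reading off the definition of $\operatorname{FG}(S)$ and using closure of $S$ under addition.
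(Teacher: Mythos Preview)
Your proof is correct. The paper does not actually give a proof of this lemma---it simply states that the proof is trivial---and your argument is precisely the natural one, using closure of $S$ under addition together with the fact that $\langle 2,3\rangle = \mathbb{N}\setminus\{1\}$; the only minor remark is that the set $\mathbb{N}\setminus\{1\}$ also contains $0$, but $0\cdot x = 0 \in S$ holds automatically, so nothing is missing.
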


The proof of the next result is straightforward, too.

\begin{lemma}\label{lemma3}
If $S \neq \mathbb{N}$ is a numerical semigroup, then
\begin{enumerate}[(a)]
\item $\frac{S}d = \mathbb{N}$ if and only if $d \in S$.
\item $\frac{S}{d} = \langle 2, 3 \rangle$ if and only if $d \in \operatorname{FG}(S)$. In particular, $\frac{S}{\operatorname{F}(S)} = \langle 2, 3 \rangle,$
\item $\frac{S}2 \cap \frac{S}3 = S \cup \operatorname{FG}(S)$,
\item if $d \in \mathbb{N} \setminus \{0,1\},$ then $S \cup \operatorname{FG}(S) \subseteq \frac{S}d$.
\end{enumerate}
\end{lemma}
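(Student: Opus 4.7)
The plan is to handle each of the four items by direct unwinding of the definition of $\frac{S}{d}$, using Lemma \ref{lemma2} as the only non-trivial ingredient.

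For part (a), if $d \in S$ then $dx \in S$ for every $x \in \mathbb{N}$ (by closure under addition and the fact $0 \in S$), so $\mathbb{N} \subseteq \frac{S}{d}$; conversely, if $\frac{S}{d} = \mathbb{N}$, specialising to $x=1$ gives $d \in S$. For part (b), if $d \in \operatorname{FG}(S)$ then Lemma \ref{lemma2} yields $\{kd \mid k \geq 2\} \subseteq S$, while $d$ itself is a gap; therefore $\frac{S}{d}$ contains $\{0\} \cup \{2,3,4,\ldots\}$ and excludes $1$, which is exactly $\langle 2,3\rangle$. The converse reads off from $2,3 \in \frac{S}{d}$ and $1 \notin \frac{S}{d}$. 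The ``in particular'' clause is then immediate from the observation, recorded just before the lemma, that $\operatorname{F}(S) \in \operatorname{FG}(S)$ whenever $S \neq \mathbb{N}$.

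Part (c) is a straightforward double inclusion. The forward direction uses that any $x$ with $2x,3x \in S$ is either already in $S$ or is a gap satisfying the defining condition of $\operatorname{FG}(S)$; the reverse is by the definitions of $S$ and of $\operatorname{FG}(S)$. Part (d) is where Lemma \ref{lemma2} does its main work: if $x \in S$ and $d \geq 2$, then $dx \in S$ by closure, so $x \in \frac{S}{d}$; while if $x \in \operatorname{FG}(S)$, Lemma \ref{lemma2} again gives $dx \in S$ for every $d \geq 2$, so $x \in \frac{S}{d}$.

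I do not anticipate any real obstacle: the statement is flagged by the authors as ``straightforward,'' and each part is a one- or two-line verification from the relevant definition. The only mildly delicate point is confirming in part (b) that the element $1$ is genuinely excluded from $\frac{S}{d}$ (which forces the characterisation $\langle 2,3\rangle$ rather than some larger semigroup), and this is exactly the condition $d \notin S$ that comes packaged into $d$ being a gap.
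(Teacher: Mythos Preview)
Your proposal is correct and matches what the paper intends: the paper gives no proof at all for this lemma, merely stating that it is ``straightforward,'' and your argument is precisely the direct verification from the definitions (with Lemma~\ref{lemma2} invoked where appropriate) that the authors have in mind.
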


Notice that if $S$ is a numerical semigroup, then $\mathbb{N}$, $\frac{S}{\operatorname{F}(S)}$ and $S \cup \operatorname{FG}(S)$ are (non-necessarily different) arithmetic extension of $S$. Furthermore, the first and the third ones are proper extensions of $S$ and the second one is proper if and only if $S \neq \langle 2, 3 \rangle$. 

\begin{example}\label{Ex3}\mbox{}\par
\begin{enumerate}[1.]
\item The only proper extension of $\langle 2,3\rangle$ is $\mathbb{N}$.
\item The proper extensions of $\langle 3,4,5\rangle$ are $\mathbb{N}$ and $\langle 2,3 \rangle = \frac{\langle 3,4,5\rangle}{2}$.
\item The proper extensions of $\langle 2,5 \rangle$ are $\mathbb{N}$ and $\langle 2,3 \rangle = \frac{\langle 2,5 \rangle}{3}$.
\item The proper extensions of $\langle 3,5,7 \rangle$ are $\mathbb{N},$ $\langle 2,3 \rangle = \frac{\langle 3,5,7\rangle}{4}$ and $\langle 3,4,5\rangle = \frac{\langle 3,5,7\rangle}{2}$.
\item The only proper extensions of $\langle 4,5,7 \rangle$ are $\mathbb{N}$, $\langle 2,3 \rangle = \frac{\langle 4,5,7\rangle}{6},\ \langle 3,4,5 \rangle = \frac{\langle 4,5,7\rangle}{3},\ \langle 2,5 \rangle = \frac{\langle 4,5,7\rangle}{2}$ and $\langle 4,5,6,7 \rangle = \frac{\langle 4,5,7\rangle}{2} \cap \frac{\langle 4,5,7\rangle}{3}$.
\end{enumerate}
Notice that we have shown that all the extensions of the following numerical semigroups: $\langle 2,3\rangle,$ $\langle 3,4,5\rangle,$ $\langle 2,5 \rangle,$  $\langle 3,5,7 \rangle$ and $\langle 4,5,7 \rangle$ are arithmetic.
\end{example}

Notice that the set $\mathcal{F}(\Delta)$ is partially ordered by inclusion. Moreover, we have the following:

\begin{proposition}\label{Prop4}
If $\Delta \neq \mathbb{N}$ is a numerical semigroup and $\mathcal{F}(\Delta)$ is the set of arithmetic extensions of $\Delta$, then
\begin{enumerate}[(a)]
\item $\max(\mathcal{F}(\Delta)) = \mathbb{N}$, 
\item $\min(\mathcal{F}(\Delta)) = \Delta$,
\item $\max(\mathcal{F}(\Delta) \setminus \{\mathbb{N}\}) = \langle 2, 3 \rangle,$
\item $\min(\mathcal{F}(\Delta) \setminus \{\Delta\}) = \Delta \cup \operatorname{FG}(\Delta)$.
\end{enumerate}
\end{proposition}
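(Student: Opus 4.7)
The plan is to represent every arithmetic extension as $T = \Delta(X)$ with $X \subseteq \mathbb{N} \setminus \Delta$ (via Proposition \ref{Prop1}) and then read off each extremum from the four parts of Lemma \ref{lemma3}. Parts (a) and (b) are immediate: $\mathbb{N} = \Delta(\varnothing) \in \mathcal{F}(\Delta)$ is the largest element because every arithmetic extension is by definition a subset of $\mathbb{N}$; and $\Delta = \Delta(\{1\})$ lies in $\mathcal{F}(\Delta)$ (noting $1 \in \mathbb{N} \setminus \Delta$ since $\Delta \neq \mathbb{N}$), while every $T \in \mathcal{F}(\Delta)$ contains $\Delta$ because it is an extension of $\Delta$.

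For (c), Lemma \ref{lemma3}(b) identifies $\langle 2,3 \rangle$ with $\frac{\Delta}{\operatorname{F}(\Delta)}$, so $\langle 2,3 \rangle \in \mathcal{F}(\Delta) \setminus \{\mathbb{N}\}$. To show it dominates this set, take any $T = \Delta(X) \neq \mathbb{N}$. Then $X \neq \varnothing$ (otherwise $T = \Delta(\varnothing) = \mathbb{N}$), so there is some $d \in X \subseteq \mathbb{N} \setminus \Delta$; since $d \cdot 1 = d \notin \Delta$, the integer $1$ cannot belong to $T$, hence $T \subseteq \mathbb{N} \setminus \{1\} = \langle 2,3 \rangle$.

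For (d), Lemma \ref{lemma3}(c) shows that $\Delta \cup \operatorname{FG}(\Delta) = \frac{\Delta}{2} \cap \frac{\Delta}{3}$ belongs to $\mathcal{F}(\Delta)$, and it is a proper extension of $\Delta$ because $\operatorname{F}(\Delta) \in \operatorname{FG}(\Delta)$ (so $\operatorname{FG}(\Delta) \neq \varnothing$). For minimality, let $T = \Delta(X) \neq \Delta$ with $X \subseteq \mathbb{N} \setminus \Delta$. If $X = \varnothing$ then $T = \mathbb{N}$ and the inclusion is trivial; otherwise, observe that $1 \notin X$, for $1 \in X$ would force $T \subseteq \frac{\Delta}{1} = \Delta$ and hence $T = \Delta$, contradicting our assumption. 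Consequently every $d \in X$ satisfies $d \geq 2$, so Lemma \ref{lemma3}(d) yields $\Delta \cup \operatorname{FG}(\Delta) \subseteq \frac{\Delta}{d}$ for each $d \in X$; intersecting over $X$ gives $\Delta \cup \operatorname{FG}(\Delta) \subseteq T$.

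There is no real obstacle in the argument; the only point that deserves attention is in (d), where one must discard the index $d = 1$ from $X$ before invoking Lemma \ref{lemma3}(d), and the hypothesis $T \neq \Delta$ is precisely what allows this. Everything else is a clean application of Proposition \ref{Prop1} together with the four parts of Lemma \ref{lemma3}.
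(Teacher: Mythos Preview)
Your proof is correct and follows essentially the same route as the paper's: both arguments identify the four extrema via the corresponding parts of Lemma~\ref{lemma3} and then verify extremality by elementary inclusion arguments. Your treatment of part~(d) is in fact slightly more careful than the paper's, since you explicitly justify why the representing set $X$ cannot contain $1$ when $T \neq \Delta$, whereas the paper simply asserts the existence of a representation avoiding $1$.
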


\begin{proof}
(a) Notice that that $\frac{\Delta}d = \mathbb{N}$ for every $d \in \Delta$ and that all numerical semigroups are submonoids of $\mathbb{N}$.

(b) If $\Delta \neq \mathbb{N}$, then $1 \not\in \Delta$ and $\frac{\Delta}1 = \Delta$. Now, it suffices to observe that $\Delta \subset \frac{\Delta}d$, for every $d \in \mathbb{N} \setminus \{0,1\}$, to conclude that every arithmetic extension of $\Delta$ contains $\Delta$.

(c) By part (b) of Lemma \ref{lemma3}, we have that $\langle 2,3\rangle \in \mathcal{F}(\Delta)$. Now, it suffices to observe that if $S \neq \mathbb{N}$ is a numerical semigroup, then $1 \not\in S$ and consequently $S \subseteq \langle 2,3 \rangle$.

(d) By part (c) of Lemma \ref{lemma3} and the remark before Proposition \ref{Prop4}, we have that $\Delta \cup \operatorname{FG}(\Delta) \in \mathcal{F}(\Delta)\setminus\{\Delta\}$. If $S \in \mathcal{F}(\Delta) \setminus \{\Delta\}$, then there exists $\{d_1, \ldots, d_n\} \subseteq (\mathbb{N}\setminus S) \setminus \{1\}$ such that $S = \frac{\Delta}{d_1}\cap \ldots \cap \frac{\Delta}{d_n}$. By part (d) of Lemma \ref{lemma3}, we have that $\Delta \cup \operatorname{FG}(\Delta) \subseteq \frac{\Delta}{d_i}$ for every $i = 1, \ldots, n$. So, $\Delta \cup \operatorname{FG}(\Delta) \subseteq S$.
\end{proof}

The next example shows (among other things) that there exist extensions of a numerical semigroup that are not arithmetic.

\begin{example}\label{Ejem5}
Let $S = \langle 5,7,9 \rangle$. By direct checking, one can see that $\operatorname{FG}(S) = \{6,8,11,13\}$. Clearly, $S \cup \{13\}$ is an extension $S$ and $S \subsetneq S \cup \{13\} \subsetneq S \cup \operatorname{FG}(S)$. Now, by part (d) of Proposition \ref{Prop4}, we have the $S \cup \{13\}$ cannot be an arithmetic extension of $S$. 
\end{example}

Now, we can take advantage of Proposition \ref{Prop4} to characterize the numerical semigroups whose extensions are all arithmetic.

\begin{theorem}\label{Th 9}
The only numerical semigroups whose extensions are all arithmetic are $\mathbb{N},\ \langle 2,3 \rangle,\ \langle 3,4,5 \rangle,\ \langle 2,5 \rangle,\ \langle 3,5,7 \rangle$ and $\langle 4,5,7 \rangle$.
\end{theorem}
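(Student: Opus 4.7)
The plan is to prove both directions. The direction $(\Leftarrow)$ is already covered by Example~\ref{Ex3}: it lists every proper extension of each of the five semigroups in the statement and realizes each one as a quotient, or intersection of quotients, of the parent; the case $\Delta=\mathbb{N}$ is vacuous.

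For the converse $(\Rightarrow)$, I would fix $\Delta\ne\mathbb{N}$ all of whose extensions are arithmetic, set $F=\operatorname{F}(\Delta)$, and first show $\operatorname{FG}(\Delta)=\{F\}$. Since $F+s\in\Delta$ for every nonzero $s\in\Delta$ and $2F\in\Delta$, the set $\Delta\cup\{F\}$ is itself a numerical semigroup, hence a proper extension of $\Delta$. By hypothesis it is arithmetic, so Proposition~\ref{Prop4}(d) forces $\Delta\cup\operatorname{FG}(\Delta)\subseteq\Delta\cup\{F\}$, i.e.~$\operatorname{FG}(\Delta)\subseteq\{F\}$; since $F\in\operatorname{FG}(\Delta)$ (by the observation preceding Lemma~\ref{lemma2}), equality holds.

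The next step would be to upgrade this to a description of the whole gap set: I claim the gaps of $\Delta$ are precisely the positive divisors of $F$. For any gap $g$, pick the largest $k\ge 1$ with $kg\notin\Delta$; then $jkg\in\Delta$ for every $j\ge 2$ by maximality, so $kg$ is a fundamental gap and hence equals $F$, giving $g\mid F$. Conversely, any positive divisor $d$ of $F$ lies outside $\Delta$, since $d\in\Delta$ would imply $F=(F/d)d\in\Delta$, contradicting $F\in\mathbb{N}\setminus\Delta$. Therefore $\Delta=\mathbb{N}\setminus D(F)$, where $D(F)$ denotes the set of positive divisors of $F$.

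The final and most delicate step is to determine for which $F$ the set $\mathbb{N}\setminus D(F)$ is actually closed under addition. The key observation is that no divisor of $F$ lies strictly between $F/2$ and $F$: if $d\mid F$ and $d>F/2$ then $F/d<2$ so $d=F$. Consequently, finding any $2\le a<F/2$ with $a\nmid F$ automatically produces a closure-breaking pair $(a,F-a)$, because $b=F-a>F/2$ and $b\ne F$ force $b\nmid F$ while $a+b=F\in D(F)$. A short case analysis on the smallest prime $p$ not dividing $F$ -- handling $p=2$ ($F$ odd, $F\ge 5$), $p=3$ ($F$ even with $3\nmid F$, $F\ge 8$), $p=5$ ($6\mid F$ with $5\nmid F$, $F\ge 12$), and $p=7$ ($30\mid F$ with $7\nmid F$, $F\ge 30$) directly, and invoking Bertrand's postulate for $p\ge 11$ (where $210\mid F$ comfortably ensures $2p<F$) -- shows such an $a$ exists whenever $F\notin\{1,2,3,4,6\}$. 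Computing $\mathbb{N}\setminus D(F)$ for each of these five residual values of $F$ yields exactly $\langle 2,3\rangle$, $\langle 3,4,5\rangle$, $\langle 2,5\rangle$, $\langle 3,5,7\rangle$ and $\langle 4,5,7\rangle$, as required. The main obstacle is precisely this enumeration, since for highly composite candidates one must recursively rule out several small primes; the $F/2$-bound on divisors is what makes the case split short and systematic.
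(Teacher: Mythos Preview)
Your proof is correct and follows exactly the paper's three-step skeleton (Lemmas~\ref{lemma6}, \ref{lemma7}, \ref{lemma8}): force $\operatorname{FG}(\Delta)=\{F\}$ via Proposition~\ref{Prop4}(d), identify the gap set with the positive divisors of $F$, and then classify those $F$ for which $\mathbb{N}\setminus D(F)$ is closed under addition. The only difference is in the last step: the paper dispatches it by citing \cite[Lemma~2]{fundamental}, whereas you give a self-contained argument using the smallest prime $p\nmid F$ together with Bertrand's postulate to produce a non-divisor $a<F/2$ whenever $F\notin\{1,2,3,4,6\}$; both routes reach the same list.
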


Before proving the previous theorem, we need some lemmas.

\begin{lemma}\label{lemma6}
If $S$ is a numerical semigroup and the cardinal of $\operatorname{FG}(S)$ is greater than $1$, then $S$ has non-arithmetic extensions.
\end{lemma}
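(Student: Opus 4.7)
The plan is to exploit Proposition \ref{Prop4}(d): any arithmetic extension of $S$ that strictly contains $S$ must contain all of $S \cup \operatorname{FG}(S)$. So if I can exhibit a proper extension of $S$ that is a numerical semigroup but omits at least one fundamental gap, that extension will automatically fail to be arithmetic.

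The natural candidate is $T := S \cup \{\operatorname{F}(S)\}$. The first step is to verify that $T$ is a numerical semigroup. Since $\operatorname{F}(S) \in \operatorname{FG}(S)$ (noted in the excerpt), Lemma \ref{lemma2} gives $k\operatorname{F}(S) \in S$ for every $k \geq 2$; in particular $\operatorname{F}(S)+\operatorname{F}(S) \in S \subseteq T$. For any nonzero $s \in S$ we have $s + \operatorname{F}(S) > \operatorname{F}(S)$, and by the very definition of the Frobenius number every integer strictly greater than $\operatorname{F}(S)$ lies in $S$, so $s+\operatorname{F}(S) \in S \subseteq T$. Hence $T$ is closed under addition, and clearly $0 \in T$ and $\mathbb{N}\setminus T$ is finite. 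So $T$ is a numerical semigroup, and $S \subsetneq T$ since $\operatorname{F}(S) \notin S$.

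Now I use the hypothesis $|\operatorname{FG}(S)| \geq 2$: there exists some $x \in \operatorname{FG}(S)$ with $x \neq \operatorname{F}(S)$. In particular $x \in \mathbb{N}\setminus S$ and $x \notin T = S \cup \{\operatorname{F}(S)\}$. If $T$ were an arithmetic extension of $S$, then $T \in \mathcal{F}(S)\setminus\{S\}$, so by Proposition \ref{Prop4}(d) we would have $S \cup \operatorname{FG}(S) \subseteq T$, forcing $x \in T$, a contradiction. Therefore $T$ is a non-arithmetic extension of $S$, which proves the lemma.

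There is no serious obstacle: the whole argument rests on the minimality statement in Proposition \ref{Prop4}(d) together with the elementary observation that adjoining the Frobenius number to a numerical semigroup always produces a numerical semigroup. The only thing to be a little careful about is ensuring closure of $T$ under addition, which is handled by the two cases $s=0$ and $s>0$ above (using $\operatorname{F}(S) \in \operatorname{FG}(S)$ to cover $\operatorname{F}(S)+\operatorname{F}(S)$).
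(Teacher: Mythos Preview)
Your proof is correct and follows essentially the same route as the paper: both take $T = S \cup \{\operatorname{F}(S)\}$ and invoke Proposition~\ref{Prop4}(d) to conclude that $T$ cannot be arithmetic since it fails to contain $S \cup \operatorname{FG}(S)$. The only difference is that you spell out the verification that $T$ is a numerical semigroup, whereas the paper takes this as known.
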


\begin{proof}
Since $S \cup \{\operatorname{F}(S)\}$ is a proper extension of $S$ and $\{\operatorname{F}(S)\} \subsetneq \operatorname{FG}(S)$, it follows that $S \cup \{\operatorname{F}(S)\} \subsetneq S \cup \operatorname{FG}(S)$. Thus, by part (d) of Proposition \ref{Prop4}, $S \cup \{\operatorname{F}(S)\}$ cannot be an arithmetic extension of $S$
\end{proof}

\begin{lemma}\label{lemma7}
If $S$ is a numerical semigroup and $\operatorname{FG}(S) = \{\operatorname{F}(S)\}$, then the set gaps of $S$ is $\big\{d \in \mathbb{N} \mid d\ \text{divides}\ \operatorname{F}(S) \big\}$.
\end{lemma}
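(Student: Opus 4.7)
The plan is to prove both containments separately. For the inclusion of divisors into gaps, I would argue by contradiction: if a positive integer $d$ divides $\operatorname{F}(S)$, write $\operatorname{F}(S) = k\,d$ for some positive integer $k$; then $d \in S$ would force $\operatorname{F}(S) = k\,d \in S$, contradicting the definition of the Frobenius number. Hence every positive divisor of $\operatorname{F}(S)$ lies in $\mathbb{N}\setminus S$, i.e.\ is a gap of $S$.

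For the reverse inclusion, which is where the hypothesis $\operatorname{FG}(S)=\{\operatorname{F}(S)\}$ is actually used, I would take an arbitrary gap $x$ and iteratively construct a strictly increasing sequence of gaps $x = x_0 < x_1 < x_2 < \ldots$ by invoking Lemma \ref{lemma2} at each step: if $x_i$ is not a fundamental gap, then Lemma \ref{lemma2} supplies some integer $k \geq 2$ with $k\,x_i \notin S$, and I set $x_{i+1} = k\, x_i$, which is strictly larger than $x_i$ and still a gap. Because $\mathbb{N}\setminus S$ is finite, this chain must terminate, and it can only terminate when some $x_n$ is a fundamental gap. The hypothesis $\operatorname{FG}(S)=\{\operatorname{F}(S)\}$ then forces $x_n = \operatorname{F}(S)$; since $x_n$ was obtained from $x$ by successive multiplications by integers $\geq 2$, it is a multiple of $x$, and therefore $x$ divides $\operatorname{F}(S)$.

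The argument has essentially no obstacle: the key point is merely that a strictly increasing chain of gaps is bounded above by $\operatorname{F}(S)$ and hence finite, after which everything reduces to a direct unwinding of Lemma \ref{lemma2} combined with the single-fundamental-gap assumption. The two inclusions together yield the claimed equality between the set of gaps of $S$ and the set of (positive) divisors of $\operatorname{F}(S)$.
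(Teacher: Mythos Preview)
Your proof is correct and follows essentially the same approach as the paper. The paper condenses your iterative construction into a single existence claim---directly asserting that some multiple $n\,d$ of the gap $d$ satisfies $n\,d \notin S$ and $\{2(n\,d),3(n\,d)\}\subset S$ (hence is a fundamental gap)---but the underlying idea and the use of the hypothesis $\operatorname{FG}(S)=\{\operatorname{F}(S)\}$ are identical.
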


\begin{proof}
Let $d$ be a gap of $S$, that is to say, $d \in \mathbb{N} \setminus S$. Since $\mathbb{N} \setminus S$ is finite, there exists $n \in \mathbb{N} \setminus \{0\}$ such that $n d \in \mathbb{N} \setminus S$, and $\big\{2(nd), 3(nd)\big\} \subset S$. So, $nd$ is a fundamental gap. Then $n d \in \operatorname{FG}(S) = \{\operatorname{F}(S)\}$ by hypothesis, and we conclude that $d$ divides $\operatorname{F}(S)$. Conversely, if $d$ divides $\operatorname{F}(S)$, say $F(S) = n d$, then $d \not\in S$; otherwise, $F(S) = n d \in S$ which is impossible. 
\end{proof}

\begin{lemma}\label{lemma8}
Let $F$ be a positive integer number. Then \[\mathbb{N} \setminus \big\{d \in \mathbb{N} \mid d\ \text{divides}\ F \big\}\] is a numerical semigroup if and only if $F \in \{1,2,3,4,6\}$.
\end{lemma}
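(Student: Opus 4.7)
Write $D(F) = \{d \in \mathbb{N} \mid d \text{ divides } F\}$ and $S = \mathbb{N} \setminus D(F)$. Because $F \geq 1$ we have $0 \notin D(F)$, so $0 \in S$, and $D(F)$ is finite, so $\mathbb{N} \setminus S$ is finite. Hence $S$ is a numerical semigroup if and only if $S$ is closed under addition, which in turn is equivalent to saying that there is no pair of non-divisors of $F$ whose sum is a divisor of $F$. This reformulation is what I would use throughout.

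For the "if" direction I would simply read off $S$ for each $F \in \{1,2,3,4,6\}$: the resulting sets $\mathbb{N} \setminus \{1\}$, $\mathbb{N} \setminus \{1,2\}$, $\mathbb{N} \setminus \{1,3\}$, $\mathbb{N} \setminus \{1,2,4\}$, $\mathbb{N} \setminus \{1,2,3,6\}$ are exactly the numerical semigroups $\langle 2,3\rangle$, $\langle 3,4,5\rangle$, $\langle 2,5\rangle$, $\langle 3,5,7\rangle$, $\langle 4,5,7\rangle$ already displayed in Example~\ref{Ex3}, so nothing remains to check.

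For the "only if" direction I would argue by contrapositive: assume $F \notin \{1,2,3,4,6\}$ and exhibit $a,b \in S$ with $a+b = F \in D(F)$. Split on the parity of $F$. If $F$ is odd and $F \geq 5$, take $a = 2$ and $b = F-2$; then $2 \nmid F$ (parity), and $F-2 \mid F$ would force $F-2 \mid 2$, so $F-2 \leq 2$, contradicting $F \geq 5$. If $F$ is even and $F \geq 8$, take $a = F/2 - 1$ and $b = F/2 + 1$; then $a \mid F$ would give $a \mid F - 2a = 2$, forcing $F \in \{4,6\}$, and $b \mid F$ would give $b \mid 2b - F = 2$, forcing $F \leq 2$, both excluded by hypothesis. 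In either case $a,b \in S$ but $a+b \notin S$, so $S$ fails to be closed under addition.

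The only nontrivial step is finding witnesses that handle all excluded $F$ uniformly; the trick in the even case, where the naive "add $2$" pair fails whenever $F-2$ happens to divide $F$, is the identity $F = (F/2 - 1) + (F/2 + 1)$, combined with the elementary observation that a divisor of $F$ lying within $1$ of $F/2$ must divide the residue $2$. After that insight the verification of both cases is routine.
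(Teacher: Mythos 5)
Your proof is correct, and it is genuinely more self-contained than the one in the paper. The paper reduces the statement to the same reformulation you use (the set is a numerical semigroup iff no two non-divisors of $F$ sum to a divisor of $F$) but then disposes of the "only if" direction by appealing to Lemma~2 of the reference \cite{fundamental}, leaving the reader to fill in the details. You instead exhibit explicit witnesses: for odd $F\geq 5$ the pair $(2, F-2)$, and for even $F\geq 8$ the pair $(F/2-1, F/2+1)$, in each case verifying non-divisibility via the elementary observation that a divisor of $F$ within distance $2$ of $F$ (resp.\ within distance $1$ of $F/2$) must divide $2$. The parity split is exactly the right way to handle the fact that $(2,F-2)$ fails for even $F$ (where $2$ itself divides $F$), and your case analysis covers every $F\notin\{1,2,3,4,6\}$. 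The "if" direction by direct inspection of the five sets matches what the paper does implicitly via Example~\ref{Ex3}. The trade-off: the paper's version is shorter but not self-contained; yours is a complete elementary argument that a reader can check without consulting \cite{fundamental}.
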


\begin{proof}
Since $\big\{d \in \mathbb{N} \mid d\ \text{divides}\ F \big\}$ is a finite set, we have that the set $\mathbb{N} \setminus \big\{d \in \mathbb{N} \mid d\ \text{divides}\ F \big\}$ is a numerical semigroup if and only if given two positive integers $x$ and $y$ not dividing $F$, then their sum does not divides $F$ either. By \cite[Lemma 2]{fundamental}, it is easy to prove this is only possible if, and only if, $F \in \{1,2,3,4,6\}$.
\end{proof}

\noindent\emph{Proof of Theorem \ref{Th 9}.} Let $S \neq \mathbb{N}$ be a numerical semigroup whose extensions are all arithmetic. By Lemma \ref{lemma6}, $\operatorname{FG}(S) = \{\operatorname{F}(S)\}$. Then, by Lemma \ref{lemma7}, \[S = \mathbb{N} \setminus \big\{d \in \mathbb{N} \mid d\ \text{divides}\ \operatorname{F}(S) \big\}.\] Now, since $S$ is a numerical semigroup, by Lemma \ref{lemma8} we have that $\operatorname{F}(S) \in \{1,2,3,4,6\}$. Therefore $S$ is equal to one of the following numerical semigroups $\mathbb{N} \setminus \{1\} = \langle 2,3 \rangle,\ \mathbb{N} \setminus \{1,2\} = \langle 3,4,5 \rangle,\ \mathbb{N} \setminus \{1,3\} = \langle 2, 5 \rangle,\ \mathbb{N} \setminus \{1,2,4\} = \langle 3,5,7 \rangle$ or $\mathbb{N} \setminus \{1,2,3,6\} = \langle 4,5,7 \rangle$.

Finally, since all the extensions of the five above numerical semigroups are arithmetic, as it was shown in Example \ref{Ex3}, we are done.\qed

\section[The Ap\'ery set of the quotient]{The Ap\'ery set of the quotient of a numerical semigroup}\label{Sect2}

If $S$ is a numerical semigroup, then the \textbf{Ap\'ery set of $S$ with respect to $n \in S \setminus \{0\}$} is $\operatorname{Ap}(S,n) := \{x \in S \mid x-n \not\in S\}.$ It is known (see, for instance, \cite[Lemma 2.4]{libro}) that \begin{equation}\label{ecu1}\operatorname{Ap}(S,n) = \{0=\omega(0),\omega(1), \ldots, \omega(w-1)\}\end{equation} where $\omega(i)$ is the least element in $S$ such that $\omega(i) \equiv i\ \text{mod}\  n$, for each $i = 0, \ldots, n-1.$ In particular, we have that the cardinality of $\operatorname{Ap}(S,n)$ is $n$.

\begin{remark}\label{Rem10}
Notice that given $x \in \mathbb{N}$ we have that $x \in S$ if and only if there exists $(k,\omega) \in \mathbb{N} \times \
\operatorname{Ap}(S,n)$ such that $x = k n + \omega$. Therefore, $(\operatorname{Ap}(S,n)\setminus\{0\} ) \cup \{n\}$ is system of generators of $S$.
\end{remark}

The Ap\'ery sets of $S$ provides formulas for the computation of $\operatorname{F}(S)$ and $\operatorname{g}(S)$:

\begin{proposition}\label{Prop11}
If $S$ is a numerical semigroup and $n \in S \setminus \{0\}$, then 
\begin{enumerate}[(a)]
\item $\operatorname{F}(S) = \max \operatorname{Ap}(S,n) - n$.
\item $\operatorname{g}(S) = \frac{1}n \sum_{\omega \in \operatorname{Ap}(S,n)} \omega - \frac{n-1}2.$  
\end{enumerate}
\end{proposition}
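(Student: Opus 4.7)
The plan is to analyze $S$ residue class by residue class modulo $n$ and use the definition (\ref{ecu1}) of the Ap\'ery set. Fix $i \in \{0,1,\ldots,n-1\}$ and let $\omega(i) \in \operatorname{Ap}(S,n)$ be the least element of $S$ congruent to $i$ modulo $n$. Since $\omega(i) + n\mathbb{N} \subseteq S$ (each such element is in $S$ because $\omega(i) \in S$, $n \in S$, and $S$ is closed under addition), and since no element smaller than $\omega(i)$ in class $i$ lies in $S$, the elements of $S$ in residue class $i$ are precisely $\{\omega(i), \omega(i)+n, \omega(i)+2n, \ldots\}$, and the gaps of $S$ in this class are precisely the positive integers $\omega(i)-kn$ with $k \geq 1$ and $\omega(i)-kn > 0$.

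For (a), the largest gap inside residue class $i$ (when one exists) is $\omega(i)-n$. Since residue class $0$ contributes $\omega(0)=0$ and no gaps, the Frobenius number is
\[
\operatorname{F}(S) = \max_{0 \le i \le n-1}\bigl(\omega(i)-n\bigr) = \max \operatorname{Ap}(S,n) - n,
\]
as claimed.

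For (b), I count the gaps in each class. Writing $\omega(i) = q_i n + i$ with $0 \le i \le n-1$, the gaps in class $i$ are $\omega(i)-n,\omega(i)-2n,\ldots,\omega(i)-q_i n$, giving exactly $q_i = (\omega(i)-i)/n$ gaps. Summing over $i$ (the term $i=0$ contributes $0$ and may be included freely),
\[
\operatorname{g}(S) = \sum_{i=0}^{n-1} \frac{\omega(i)-i}{n} = \frac{1}{n}\sum_{\omega \in \operatorname{Ap}(S,n)} \omega - \frac{1}{n}\cdot \frac{n(n-1)}{2} = \frac{1}{n}\sum_{\omega \in \operatorname{Ap}(S,n)} \omega - \frac{n-1}{2}.
\]

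Both parts are essentially bookkeeping once the residue-class structure of $S$ relative to $n$ is in hand; the only step requiring a moment's care is verifying that the gaps in class $i$ are exactly $\omega(i)-n,\ldots,\omega(i)-q_i n$ (nothing larger, because $\omega(i)\in S$ and $S+n\mathbb{N}\subseteq S$; nothing smaller, because $\omega(i)$ is the \emph{least} element of $S$ in its class). This is the only potential source of confusion and it follows directly from the definition of $\omega(i)$.
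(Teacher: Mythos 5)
Your proof is correct: the residue-class decomposition of $\mathbb{N}$ modulo $n$, with the observation that the gaps in class $i$ are exactly $\omega(i)-n,\ldots,\omega(i)-q_in$, is precisely the standard argument (Selmer's formulas). The paper does not prove the proposition itself but simply cites \cite[Proposition 2.12]{libro}, where this same argument is given, so your write-up is a faithful self-contained version of the intended proof.
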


\begin{proof}
For a proof, see for instance \cite[Proposition 2.12]{libro}.
\end{proof}

As usual $\lfloor r \rfloor$ and $\lceil r \rceil$ stand for the integer part and the upper integer part of the rational number $r$, respectively. If $a$ and $b$ are integer numbers, then we write $(a\ \text{mod}\  b)$ for the remainder of the Euclidean of $a$ by $b$, in symbols, $(a\ \text{mod}\  b) = a - \lfloor \frac{a}b \rfloor b$.

\begin{theorem}\label{Th 12}
Let $S$ be a numerical semigroup and $n \in S \setminus \{0\}$. If $\operatorname{Ap}(S,n) = \{0=\omega(0),\omega(1), \ldots, \omega(n-1),\}$ and $a \in \mathbb{N} \setminus S$, then 
\[\operatorname{Ap}\left(\frac{S}a,n\right) = \{0, n\, \kappa_1 + 1, n\, \kappa_2 + 2, \ldots, n\, \kappa_{n-1} + n-1\},\] where $\kappa_i = \Big\lceil \frac{\omega\big(
ai\ \text{mod}\  n\big) - a\,i}{a\, n} \Big\rceil,\ i = 1, \ldots, n-1$.
\end{theorem}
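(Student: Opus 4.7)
The plan is to compute, residue class by residue class modulo $n$, the smallest element of $S/a$ in each class. Since $n \in S$ implies $n \in S/a$ (because $an$ is a sum of $a$ copies of $n \in S$), the Apéry set $\operatorname{Ap}(S/a,n)$ is well-defined and contains, for each $i \in \{0, 1, \ldots, n-1\}$, exactly one representative congruent to $i$ modulo $n$. The case $i = 0$ is trivial, so the task reduces to identifying the smallest $x \in S/a$ with $x \equiv i \pmod{n}$ for each $i \in \{1, \ldots, n-1\}$.

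Next I would parametrize such candidates as $x = n k + i$ with $k \in \mathbb{N}$. By the definition of the quotient, $x \in S/a$ is equivalent to $a x = a n k + a i \in S$. The crucial observation is that $a x \equiv a i \equiv (a i \bmod n) \pmod{n}$, so by the description of $\operatorname{Ap}(S,n)$ in equation (1), the element $a x$ lies in $S$ precisely when $a x \geq \omega(a i \bmod n)$. This rearranges to
\[
k \ \geq\ \frac{\omega(a i \bmod n) - a\, i}{a\, n},
\]
so the smallest non-negative integer $k$ satisfying this inequality is a candidate for $\kappa_i$, and then $n \kappa_i + i$ is the smallest element of $S/a$ in the $i$-th residue class.

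The only point requiring verification — and really the main obstacle to expressing $\kappa_i$ as a plain ceiling rather than $\max\bigl(0, \lceil\cdot\rceil\bigr)$ — is that the ceiling is automatically non-negative for $1 \leq i \leq n-1$. This follows from the elementary estimate
\[
a i - a n \ \leq\ a(n-1) - a n \ =\ -a \ <\ 0 \ \leq\ \omega(a i \bmod n),
\]
which gives $(\omega(a i \bmod n) - a i)/(a n) > -1$ and hence $\kappa_i = \lceil (\omega(a i \bmod n) - a i)/(a n) \rceil \geq 0$. Collecting the $n$ representatives $\{0, n\kappa_1 + 1, \ldots, n\kappa_{n-1} + (n-1)\}$, which clearly lie in distinct residue classes modulo $n$, yields the full Apéry set and completes the proof.
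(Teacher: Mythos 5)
Your proof is correct and follows essentially the same route as the paper's: parametrize the candidates in each residue class as $nk+i$, translate membership in $\frac{S}{a}$ into the inequality $ank+ai \geq \omega(ai \bmod n)$, and take the ceiling of the resulting bound on $k$. The only difference is that you explicitly verify the ceiling is non-negative (so that it really is the minimal $k \in \mathbb{N}$), a point the paper leaves implicit.
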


\begin{proof}
Let $i \in \{1, \ldots, n-1\}$ and $k \in \mathbb{N}.$ Then $k\, n + i \in \frac{S}a$ if and only if $a(k\, n + i) \in S$. Thus, by Remark \ref{Rem10}, we have that $a\, k\, n + a\, i \in S$ if and only if $a\, k\, n + a\, i  \geq \omega\big(ai\ \text{mod}\  n\big)$. Therefore, $k\, n + i \in \frac{S}a$ if and only if \[k \geq \frac{\omega\big(ai\ \text{mod}\  n\big) - a\,i}{a\, n}.\] Hence, the smallest element in $\frac{S}a$ that is congruent with $i$ modulo $n$ is $\Big\lceil \frac{\omega\big(ai\ \text{mod}\  n\big) - a\,i}{a\, n} \Big\rceil n + i$. Now, by \eqref{ecu1}, we are done. 
\end{proof}

Given a numerical semigroup $S$, we can use the above result to compute the Ap\'ery set of the quotient of $S$ by $b$, and thereafter use Proposition \ref{Prop11}, to compute the Frobenius number and the genus of $\frac{S}b$.  Let us illustrate these computations through an example.

\begin{example}
Let $S = \langle 7,8 \rangle$. Then $\mathrm{Ap}(S,7) = \{\omega(0)=0, \omega(1)=8, \omega(2)=16, \omega(3)=24,  \omega(4)=32, \omega(5)=40, \omega(6)=48\}$. If $\overline S = \frac{S}3$, then, by Theorem \ref{Th 12}, we have that 
\[\mathrm{Ap}\left(\overline S, 7\right) = \{0\} \cup \{7\, \kappa_i + i \mid i = 1, \ldots, 6\},\] where $\kappa_i = \Big\lceil \frac{\omega\big(3i\ \text{mod}\  7)\big) - 3\,i}{21} \Big\rceil,\ i = 1, \ldots, 6$. In this case, $\kappa_1 = 1, \kappa_2 = 2, \kappa_3 = 1, \kappa_4 = 2, \kappa_5 = 0$ and $\kappa_6 = 1$. Therefore, $\mathrm{Ap}\left(\overline S, 7\right) = \{0,8,16,10,18,5,13\}$ and, consequently, \[\operatorname{F}(\overline S) = 18-7 = 11\] and \[\operatorname{g}(\overline S) = \frac{1}7 (8+16+10+18+5+13)-\frac{7-1}2 = 10 - 3 = 7\] by Proposition \ref{Prop11}.
\end{example}

Clearly, if $S$ and $T$ are numerical semigroups, then $S \cap T$ is a numerical semigroup, too. Therefore, from expression \eqref{ecu1} we deduce the following result.

\begin{proposition}\label{Prop 16}
Let $S$ and $T$ be numerical semigroups and $n \in (S \cap T) \setminus \{0\}$. If $\operatorname{Ap}(S,n) = \{0=\omega(0),\omega(1), \ldots, \omega(n-1),\}$ and $\operatorname{Ap}(S',n) = \{0=\omega'(0),\omega'(1), \ldots, \omega'(n-1),\}$, then $\operatorname{Ap}(S \cap T, n) = \{0=\bar \omega(0),\bar \omega(1), \ldots, \bar \omega(n-1),\}$ where $\bar \omega(i) = \max\big(\omega(i), \omega'(i)\big),$ para todo $i = 1, \ldots, n-1$.
\end{proposition}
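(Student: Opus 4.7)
The plan is to use the characterization of the Apéry set via least residues modulo $n$ given in equation \eqref{ecu1}, combined with the elementary observation that for a numerical semigroup $S$ containing $n$, membership in $S$ among numbers congruent to $i \pmod n$ is completely determined by the bound $\omega(i)$.

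First I would note that $S\cap T$ is itself a numerical semigroup: it is a submonoid of $(\mathbb{N},+)$, and $\mathbb{N}\setminus(S\cap T)=(\mathbb{N}\setminus S)\cup(\mathbb{N}\setminus T)$ is finite as a union of two finite sets. Since $n\in (S\cap T)\setminus\{0\}$, the Apéry set $\operatorname{Ap}(S\cap T,n)$ is well-defined and, by \eqref{ecu1}, has the form $\{\tilde\omega(0),\tilde\omega(1),\ldots,\tilde\omega(n-1)\}$ where $\tilde\omega(i)$ is the least element of $S\cap T$ congruent to $i$ modulo $n$. The goal is then to show $\tilde\omega(i)=\bar\omega(i)$ for every $i\in\{0,1,\ldots,n-1\}$.

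The key auxiliary step I would prove is the following: if $x\in\mathbb{N}$ satisfies $x\equiv i\pmod n$ for some $i\in\{0,1,\ldots,n-1\}$, then $x\in S$ if and only if $x\geq \omega(i)$. The ``only if'' direction is immediate from the minimality property defining $\omega(i)$. For the ``if'' direction, note that $x-\omega(i)$ is a non-negative multiple of $n$, so $x=\omega(i)+kn$ for some $k\in\mathbb{N}$; since $\omega(i)\in S$ and $n\in S$, it follows that $x\in S$. The same equivalence holds verbatim for $T$ with $\omega'(i)$ in place of $\omega(i)$.

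Combining these characterizations, for any $x\in\mathbb{N}$ with $x\equiv i\pmod n$ we have $x\in S\cap T$ if and only if $x\geq \omega(i)$ and $x\geq\omega'(i)$, that is, $x\geq\max\bigl(\omega(i),\omega'(i)\bigr)=\bar\omega(i)$. Hence the smallest element of $S\cap T$ congruent to $i$ modulo $n$ is $\bar\omega(i)$, giving $\tilde\omega(i)=\bar\omega(i)$, as required. There is no real obstacle here; the only subtlety is to state and use the elementary membership criterion precisely, and to check at the start that $S\cap T$ is a numerical semigroup containing $n$ so that the notation $\operatorname{Ap}(S\cap T,n)$ is justified.
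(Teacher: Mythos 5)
Your proof is correct and follows exactly the route the paper intends: the paper deduces the proposition directly from the description \eqref{ecu1} of the Ap\'ery set as the least elements of $S$ in each residue class modulo $n$, and your membership criterion ($x\equiv i \pmod n$ lies in $S$ iff $x\geq\omega(i)$) is precisely the detail the paper leaves implicit. Nothing is missing.
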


Thus, combining Theorem \ref{Th 12} and Proposition \ref{Prop 16}, we can compute the Ap\'ery set of all the arithmetic extension of $\Delta$. 

\begin{example}
By direct computation, one can see that the Ap\'ery set of $\langle 4,5,7 \rangle$ with respect to $4$ is $\{0,5,10,7\}$. By Theorem \ref{Th 12}, we have that the Ap\'ery set of $\frac{\langle 4,5,7 \rangle}2 = \langle 2,5 \rangle$ and $\frac{\langle 4,5,7 \rangle}3 = \langle 3,4,5 \rangle$ with respect to $4$ are $\{0,5,2,7\}$ and $\{0,5,6,3\}$, respectively. Therefore, the Ap\'ery set of $\frac{\langle 4,5,7 \rangle}2 \cap \frac{\langle 4,5,7 \rangle}3 = \langle 4,5,6,7 \rangle$ is $\{0,5,6,7\}$.
\end{example}

Using the notation introduced in \cite{proportionally}, a proportionally modular Diophantine inequality is an expression of the form \begin{equation}\label{ecu2} a\, x\ \text{mod}\  b \leq c\, x,\end{equation} where $a,b$ and $c$ are positive integer numbers. Given $a,b$ and $c \in \mathbb{N} \setminus \{0\},$ the set of positive integer solutions of \eqref{ecu2} constitutes a numerical semigroup (\cite[Theorem 13]{proportionally}). These semigroups are called proportionally modular numerical semigroups (PM-semigroups, for short). In \cite[Corollary 3.5]{archiv} the following characterization of PM-semigroup was given.

\begin{proposition}\label{Prop 13}
If $S$ is a PM-semigroup, there exist $a$ and $b \in \mathbb{N} \setminus \{0\}$ such that $S = \frac{\langle a,a+1 \rangle}b$.
\end{proposition}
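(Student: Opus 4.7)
The plan is to express both sides of the desired equality $S=\frac{\langle a,a+1\rangle}{b}$ in a common ``interval-meets-$\mathbb{N}$'' form. On the PM side, writing $S=\{x\in\mathbb{N}:ax\bmod b\le cx\}$, the defining inequality is equivalent to the existence of $k\in\mathbb{N}$ with $(a-c)x\le kb\le ax$, i.e., to the interval $[(a-c)x/b,\,ax/b]$ meeting $\mathbb{N}$ (the degenerate case $a\le c$ gives $S=\mathbb{N}=\langle 1,2\rangle/1$, so I may assume $a>c$). On the quotient side, since $\langle a',a'+1\rangle=\{na'+j:n\in\mathbb{N},\,0\le j\le n\}$, one checks at once that an integer $y$ lies in $\langle a',a'+1\rangle$ exactly when $[y/(a'+1),\,y/a']$ meets $\mathbb{N}$; hence $x\in\langle a',a'+1\rangle/b'$ iff $[b'x/(a'+1),\,b'x/a']$ meets $\mathbb{N}$.

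The proposition therefore reduces to an arithmetic statement: for any positive rationals $u=(a-c)/b<v=a/b$, one can find positive integers $a',b'$ such that the set of $x\in\mathbb{N}$ for which $[ux,vx]$ meets $\mathbb{N}$ coincides with the set of $x$ for which $[b'x/(a'+1),\,b'x/a']$ does. The naive idea of matching the intervals identically forces $a'=(a-c)/c$ and $b'=aa'/b$, neither of which need be an integer, so one must replace $[u,v]$ by a different interval of the prescribed form while preserving the ``integer-meeting pattern'' on every dilate.

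My tool of choice for this reduction is the Stern--Brocot tree: pick the fraction $p/q\in[u,v]$ of least denominator, take its immediate Stern--Brocot neighbours $p_-/q_-<p/q<p_+/q_+$ (characterised by $pq_\pm-p_\pm q=\pm 1$), and argue that $[p_-/q_-,\,p_+/q_+]$ induces the same PM-semigroup as $[u,v]$. Iterating this simplification forces the endpoints to acquire equal numerators and consecutive denominators---precisely $b'/(a'+1)$ and $b'/a'$---which is the representation required.

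The principal obstacle is the invariance claim underlying this Stern--Brocot reduction: one must verify that $[ux,vx]$ meets $\mathbb{N}$ iff $[p_-x/q_-,\,p_+x/q_+]$ does for every $x\in\mathbb{N}$. This is a delicate Diophantine fact, essentially the content of \cite[Corollary 3.5]{archiv} (via \cite[Corollary 4.1]{archiv}), and a self-contained proof requires either a careful induction along the Stern--Brocot path from $p/q$ to the target fraction or an appeal to the three-distance theorem. A cleaner alternative is simply to invoke these references, which is the route taken by the authors here.
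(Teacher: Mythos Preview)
The paper gives no proof of this proposition: it is quoted directly from \cite[Corollary~3.5]{archiv}, so there is nothing to compare your argument against beyond that citation.

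Your interval reformulations are correct and standard: $x$ satisfies $ax\bmod b\le cx$ iff $[(a-c)x/b,\,ax/b]\cap\mathbb{N}\neq\varnothing$, and $y\in\langle a',a'+1\rangle$ iff $[y/(a'+1),\,y/a']\cap\mathbb{N}\neq\varnothing$. This genuinely reduces the proposition to an interval-replacement statement. However, the Stern--Brocot portion of your sketch has two gaps. First, the invariance step---that passing from $[u,v]$ to the interval spanned by the Stern--Brocot neighbours of the least-denominator fraction preserves the semigroup---is, as you yourself say, ``essentially the content of \cite[Corollary~3.5]{archiv}''; invoking it makes the argument circular rather than independent. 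Second, and more concretely, the assertion that ``iterating this simplification forces the endpoints to acquire equal numerators and consecutive denominators'' is not justified and in fact cannot work as stated: two fractions $b'/(a'+1)$ and $b'/a'$ satisfy $b'\cdot a'-b'\cdot(a'+1)=-b'$, so they are Farey/Stern--Brocot neighbours only when $b'=1$. Your replacement procedure always outputs a unimodular pair, hence can never land on the target form $[b'/(a'+1),\,b'/a']$ for $b'>1$. Some additional idea (e.g.\ a change of variable or the Bézout-sequence machinery actually used in \cite{archiv}) is needed to bridge this.

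In short: as a pointer to the literature your last sentence is exactly what the paper does; as an independent proof the Stern--Brocot outline is incomplete.
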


\begin{remark}\label{Rem 13}
It is a challenging problem to find formulas for the Frobenius number or the genus of $\frac{\langle a,a+1 \rangle}b$ in terms of $a$ and $b$. However, since \[\operatorname{Ap}(\langle a,a+1 \rangle, a) = \{\omega(0)=0, \omega(1)=a+1, \ldots, \omega(a-1) = (a-1)a + (a-1)\},\] we can use Proposition \ref{Prop11} and Theorem \ref{Th 12} to conclude that 
\[
\operatorname{F}\left(\frac{\langle a,a+1 \rangle}b\right) = \max_{i=1, \ldots, a-1} \left(\Big\lceil \frac{(bi\ \text{mod}\ a) (a+1) - b\,i}{a\, b} \Big\rceil a + i\right) - a
\]
and 
\begin{align*}
\operatorname{g}\left(\frac{\langle a,a+1 \rangle}b\right) & = \frac{1}a \left(\sum_{i=1}^{a-1}  \Big\lceil \frac{(bi\ \text{mod}\ a) (a+1) - b\,i}{a\, b} \Big\rceil a + i \right) - \frac{a-1}2 \\ & =\sum_{i=1}^{a-1}  \Big\lceil \frac{(bi\ \text{mod}\ a) (a+1) - b\,i}{a\, b} \Big\rceil.
\end{align*}
Thus, by Proposition \ref{Prop 13}, we have obtained formulas for the Frobenius number and the genus of any PM-semigroup. 

Observe that from Proposition \ref{Prop 16} and Remark \ref{Rem 13} we can deduce a formula for the Frobenius number of any T-semigroup (see expression \eqref{ecu0}).
\end{remark}

\section[An algorithm to compute the arithmetic extensions]{An algorithm to compute the arithmetic extensions of a numerical semigroup}\label{Sect3}

Let $\Delta$ be a numerical semigroup. Our main aim in this section is to give an algorithm for the computation of $\mathcal{F}(\Delta)$. 

\begin{definition}
Let $S$ be a numerical semigroup and let $n \in S \setminus \{0\}$. If $\operatorname{Ap}(S,n) = \{0 = \omega(0), \omega(1) = \kappa_1 n + 1, \ldots, \omega(n-1) = \kappa_{n-1} n + n-1\}$, then the vector \[\varphi_n(S) := (\kappa_1, \ldots, \kappa_{n-1})\] is called the $n-$th \textbf{Kunz-coordinates vector of $S$}.
\end{definition}

The Kunz-coordinates vector of a numerical semigroup where introduced by V.Blanco and J.Puerto in \cite{BP}.  

Observe that Theorem \ref{Th 12} provides a formula for $\varphi_n\left(\frac{\Delta}a\right)$ in terms of $a, n$ and $\operatorname{Ap}(\Delta,n)$.

\medskip
\textbf{Notation.}
Given $\mathbf x = (x_1, x_2, \ldots, x_{n-1})$ and $\mathbf y = (y_1, y_2, \ldots, y_{n-1}) \in \mathbb{N}^{n-1}$, we write $\mathbf x \vee \mathbf y := \big(\max(x_1,y_1), \max(x_2,y_2), \ldots, \max(x_{n-1},y_{n-1}) \big).$

\medskip
As an immediate consequence of Proposition \ref{Prop 16} we have the following result.

\begin{corollary}\label{Cor17}
If $S$ and $T$ are numerical semigroups and $n \in (S \cap T) \setminus \{0\}$, then $\varphi_n(S \cap T) = \varphi_n(S) \vee \varphi_n(T)$.
\end{corollary}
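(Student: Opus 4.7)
The plan is to simply unpack the definition of $\varphi_n$ and combine it with Proposition~\ref{Prop 16}.

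First, I would write $\operatorname{Ap}(S,n) = \{0, \omega(1), \ldots, \omega(n-1)\}$ and $\operatorname{Ap}(T,n) = \{0, \omega'(1), \ldots, \omega'(n-1)\}$, where by definition of the Kunz-coordinates vector we have $\omega(i) = \kappa_i\, n + i$ and $\omega'(i) = \kappa'_i\, n + i$ for $i = 1, \ldots, n-1$, with $\varphi_n(S) = (\kappa_1, \ldots, \kappa_{n-1})$ and $\varphi_n(T) = (\kappa'_1, \ldots, \kappa'_{n-1})$.

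Next, I would apply Proposition~\ref{Prop 16} to $S \cap T$: since $n \in (S \cap T) \setminus \{0\}$, we obtain $\operatorname{Ap}(S \cap T, n) = \{0, \bar\omega(1), \ldots, \bar\omega(n-1)\}$ where $\bar\omega(i) = \max(\omega(i), \omega'(i))$. The key observation is that both $\omega(i)$ and $\omega'(i)$ are congruent to $i$ modulo $n$, so they share the same residue class; since $n>0$, the maximum of two integers of the form $k n + i$ (with the same $i$) is $\max(k, k')\, n + i$. Therefore
\[
\bar\omega(i) = \max(\kappa_i, \kappa'_i)\, n + i,
\]
which, by the definition of the Kunz-coordinates vector of $S \cap T$, means the $i$-th entry of $\varphi_n(S \cap T)$ is precisely $\max(\kappa_i, \kappa'_i)$.

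Finally, this is exactly the $i$-th entry of $\varphi_n(S) \vee \varphi_n(T)$ under the notation introduced above, so $\varphi_n(S \cap T) = \varphi_n(S) \vee \varphi_n(T)$, as claimed. There is no substantive obstacle here: the only point to verify carefully is that once two Apéry elements share a residue class mod $n$, taking coordinatewise maxima of Apéry sets corresponds to coordinatewise maxima of Kunz vectors, which is immediate because the map $k \mapsto k n + i$ is strictly increasing in $k$.
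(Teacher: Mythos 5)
Your proof is correct and follows exactly the route the paper intends: the paper states the corollary as an immediate consequence of Proposition~\ref{Prop 16}, and your argument simply supplies the (easy) missing details, namely that the coordinatewise maximum of Ap\'ery elements in a fixed residue class $i$ modulo $n$ corresponds to the maximum of the Kunz coordinates because $k \mapsto kn+i$ is increasing.
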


The following result is one of the key point in our algorithm.

\begin{corollary}
If $\Delta \neq \mathbb{N}$ is numerical semigroups and $n \in S \setminus \{0\}$, then set of $n-$th Kunz-coordinates vectors of the arithmetic extensions of $\Delta$ different from $\mathbb{N}$ is \[K:= \left\{\bigvee_{a \in X} \varphi_n\left(\frac{\Delta}a\right) \mid X \subseteq \mathbb{N}\setminus \Delta,\ X \neq \varnothing\ \text{and}\ X = \operatorname{msg}(\langle X \rangle) \right\}\]
\end{corollary}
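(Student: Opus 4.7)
The plan is to combine Theorem \ref{Th1}, which parametrizes $\mathcal{F}(\Delta)$ by the subsets $X \subseteq \mathbb{N}\setminus \Delta$ with $X = \operatorname{msg}(\langle X\rangle)$, with an iterated version of Corollary \ref{Cor17}. The desired equality of sets should follow once the correspondence between nonempty $X$ and extensions different from $\mathbb{N}$ is recorded carefully.

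First I would invoke Theorem \ref{Th1} to write every member of $\mathcal{F}(\Delta)$ as $\Delta(X) = \bigcap_{a\in X} \frac{\Delta}{a}$ with $X \subseteq \mathbb{N}\setminus \Delta$ and $X = \operatorname{msg}(\langle X\rangle)$, where the choice $X = \varnothing$ corresponds to $\Delta(\varnothing) = \mathbb{N}$ by convention. For any $a \in \mathbb{N}\setminus \Delta$ one has $1 \notin \frac{\Delta}{a}$ (since $a\cdot 1 = a \notin \Delta$), so $\Delta(X) \subseteq \frac{\Delta}{a} \subsetneq \mathbb{N}$ whenever $X$ is a nonempty subset of $\mathbb{N}\setminus \Delta$. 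Hence $\mathcal{F}(\Delta)\setminus\{\mathbb{N}\}$ is parametrized precisely by the nonempty such $X$, matching the index set appearing in $K$.

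Next I would note that $n \in \Delta \subseteq \frac{\Delta}{a}$ for every $a \in \mathbb{N}$ (and hence for every intersection of such quotients), so every $\varphi_n\bigl(\frac{\Delta}{a}\bigr)$ and every $\varphi_n\bigl(\Delta(X)\bigr)$ is well defined. A straightforward induction on $|X|$ then yields
\[
\varphi_n\bigl(\Delta(X)\bigr) = \bigvee_{a\in X}\varphi_n\!\left(\frac{\Delta}{a}\right),
\]
where the inductive step writes $\Delta(X) = \Delta(X')\cap \frac{\Delta}{a}$ for some $a \in X$ and $X' = X\setminus\{a\}$ and applies Corollary \ref{Cor17} to the two numerical semigroups $\Delta(X')$ and $\frac{\Delta}{a}$, both of which contain $n$.

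Combining the two steps gives the claimed equality: the $n$-th Kunz-coordinates vectors of the arithmetic extensions of $\Delta$ distinct from $\mathbb{N}$ are exactly the vectors on the right-hand side. I do not anticipate a genuine obstacle; the only point requiring some care is the bookkeeping around the empty set and the observation that $\varphi_n$ is determined by the semigroup itself, so different $X$'s producing the same intersection automatically produce the same Kunz vector, guaranteeing that the two sides of the displayed equality describe the same subset of $\mathbb{N}^{n-1}$.
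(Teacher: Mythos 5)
Your proposal is correct and follows essentially the same route as the paper: invoke Theorem \ref{Th1} to parametrize the arithmetic extensions other than $\mathbb{N}$ by the nonempty sets $X \subseteq \mathbb{N}\setminus\Delta$ with $X = \operatorname{msg}(\langle X\rangle)$, and then apply Corollary \ref{Cor17} (iterated over the finite set $X$) to identify the Kunz vector of the intersection with the join of the individual Kunz vectors. The paper leaves the induction and the $\mathbb{N}$-bookkeeping implicit; you have simply spelled them out.
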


\begin{proof}
By Theorem \ref{Th1}, the set of arithmetic extensions, $\mathcal{F}(\Delta)$, of $\Delta$ is equal to $\Big\{\displaystyle{\bigcap_{a \in X} \frac{\Delta}a} \mid X\ \text{is a non-empty subset of}\ \mathbb{N}\setminus \Delta\ \text{and}\ X = \operatorname{msg}(\langle X \rangle) \Big\} \cup \mathbb{N}$. Thus, by Corollary \ref{Cor17}, our claim follows.
\end{proof}

Now, we are in condition to give an algorithm for the computation of $\mathcal{F}(\Delta)$.

\begin{algorithm}\label{14}\mbox{}\par
\textsc{Input:} A numerical semigroup $\Delta$.\par
\textsc{Output:} $\mathcal{F}(\Delta).$
\begin{itemize}
\item[1.] Set $\mathcal{F}(\Delta) = \{\mathbb{N},\langle 2,3 \rangle, \Delta\}$ and $S:=\Delta \cup \operatorname{FG}(\Delta)$.
\item[2.] Compute $m:=\operatorname{m}(S)$ and $G:=\mathbb{N} \setminus S$.
\item[3.] For each $a \in G$ compute $\mathbf{v}_a := \varphi_m\left(\frac{\Delta}a\right)$.
\item[4.] Compute the set $\mathcal{X} = \{X \subseteq \mathbb{N}\setminus \Delta,\ X \neq \varnothing\ \text{and}\ X = \operatorname{msg}(\langle X \rangle)\}.$ 
\item[5.] Compute $K := \Big\{\displaystyle{\bigvee_{a \in X} \mathbf{v}_a} \mid X \in \mathcal X\Big\}$.
\item[6.] For each $(\kappa_1, \ldots, \kappa_{m-1}) \in K$, append the numerical semigroup generated by $\{ m, \kappa_1 m + 1, \ldots, \kappa_{m-1} m + m-1\}$ to $\mathcal{F}(\Delta)$.
\end{itemize}
\end{algorithm}

The following GAP \cite{gap} function is a rude implementation of Algorithm \ref{14}. This function requires the GAP package \texttt{NumericalSgps} \cite{numericalsgps}.

{\small
\begin{verbatim}
 ArithmeticExtensions:=function(D)
  local FD,G,S,m,v,a,pow,C,x,sg;

  FD:=[NumericalSemigroup([1]),NumericalSemigroup(2,3), D];
  G:=Gaps(D);
  S:=NumericalSemigroupByGaps(Difference(G,FundamentalGaps(D))); 
 
  m:=Multiplicity(S);
  G:=GapsOfNumericalSemigroup(S);

  v:=[];
  for a in G do
   Append(v,[KunzCoordinates(S/a,m)]); 
  od;
  v:=Set(v);

  pow:=Combinations(v);
  pow:=pow{[2..Length(pow)]};

  C:=[];
  for x in pow do
   x:=TransposedMat(x); 
   Append(C,[List(x,i->Maximum(i))]);
  od;
  C:=Set(C);

  for x in C do
   sg:=Concatenation([m],List(x,i->m*i)+[1..Length(x)]);
   Append(FD,[NumericalSemigroup(sg)]);
  od;
  return Set(FD);
 end;
\end{verbatim}
}

\begin{example}
We can compute the arithmetical extension of $\Delta = \langle 4,6,7 \rangle$ by using the following commands
{\small
\begin{verbatim}
 LoadPackage("NumericalSgps");
 D:=NumericalSemigroup(4,6,7);
 ArithmeticExtensions(D);
\end{verbatim}}
By this way, we obtain that $\Delta$ has four arithmetic extensions, say $\mathbb{N},  \langle 2,3 \rangle, \langle 2,5 \rangle$ and $\langle 4,6,7 \rangle.$ 
\end{example}


\end{document}